\newcommand\del{\bgroup\markoverwith{\textcolor{red}{\rule[.5ex]{2pt}{.6pt}}}\ULon}
\newcommand{\pd}[2]{\frac{\partial #1}{\partial #2}}
\newcommand{\dt}{\Delta t}
\newcommand{\dx}{\Delta x}
\newcommand{\pds}[3]{\frac{\partial ^{#3} #1}{\partial #2^{#3}}}
\newtheorem{thm}{Theorem}[section]
\newtheorem{lem}[thm]{Lemma}
\theoremstyle{remark}
\theoremstyle{definition}
\newtheorem{defn}[thm]{Definition}
\journal{Applied Mathematics and Computation}
\begin{document}

\begin{frontmatter}



\title{Oscillation-free method for semilinear diffusion equations under noisy initial conditions}


\author[gfu]{R. C. Harwood\corref{cor1}} 
\cortext[cor1]{Corresponding author}
\ead{rharwood@georgefox.edu}
\author[uta]{Likun Zhang}
\ead{lzhang@chaos.utexas.edu}
\author[wsu]{V. S. Manoranjan}
\ead{mano@wsu.edu}

\address[gfu]{Department of Mathematics, George Fox University, Newberg, OR 97132, USA}
\address[uta]{Department of Physics and Center for Nonlinear Dynamics, University of Texas at Austin, Austin, TX 78712, USA}
\address[wsu]{Department of Mathematics, Washington State University, Pullman, WA 99164, USA}

\begin{abstract}
Noise in initial conditions from measurement errors can create unwanted oscillations which propagate in numerical solutions. We present a technique of prohibiting such oscillation errors when solving initial-boundary-value problems of semilinear diffusion equations. Symmetric Strang splitting is applied to the equation for solving the linear diffusion and nonlinear remainder separately. An oscillation-free scheme is developed for overcoming any oscillatory behavior when numerically solving the linear diffusion portion. To demonstrate the ills of stable oscillations, we compare our method using a weighted implicit Euler scheme to the Crank-Nicolson method. The oscillation-free feature and stability of our method are analyzed through a local linearization. The accuracy of our oscillation-free method  is proved and its usefulness is further verified through solving a Fisher-type equation where oscillation-free solutions are successfully produced in spite of random errors in the initial conditions.
\end{abstract}

\begin{keyword}
numerical oscillations, splitting methods, finite difference, Euler method, reaction-diffusion, semilinear parabolic partial differential equation


\end{keyword}

\end{frontmatter}


\section{Introduction\label{sec:intro}}
Before analyzing numerical methods for stability it is necessary that the equations they solve be well-posed and stable themselves. In this paper, we consider the semilinear diffusion equation with zero Dirichlet boundary conditions, an initial condition, $u_0(x)$, matching these boundary conditions, and an asymptotically stable (in the Lyapunov sense) steady-state solution $\bar{u}_S(x)$.
\begin{defn}\label{def:semilinear}
For an open interval $I=(a,b)$ of $\mathbb{R}$, we focus on semilinear diffusion equations of the form
\begin{eqnarray}\label{eqn:semilinear}
\pd{u}{t} = \pds{u}{x}{2} +R(u)\ in\ I\times[0,\infty),\\
u(0,x)=u_0(x)\ on\ I,\nonumber\\
u(t,x)=0\ on\ \partial I \times [0,\infty).\nonumber
\end{eqnarray}
where $R(u)\in C^\infty(\mathbb{R})$ is a smooth reaction function and given $\epsilon>0$, there exists a $\delta>0$ such that $||u(x,t)-\bar{u}_S(x)||_2<\epsilon$ for all $t>0$ and for all $u_0(x)$ such that $||u_0(x)-\bar{u}_S(x)||_2<\delta$, and $u(x,t)\to \bar{u}_S(x)$ as $t\to \infty$.
\end{defn}

A strong motivation for studying semilinear diffusion equations is the various dynamical processes in physical, chemical, and biological systems they describe. For example, the choice $R(u) = u (1 - u )$ yields the Fisher-KPP equation that has been used to describe the propagation of a mutant gene as well as chemical propagation of a flame \cite{Ref:Fisher1937,ref:Kolmogorov,Ref:Lee2010}. Rayleigh-Benard convection can be described using $R(u) = u(1 - u^2)$ yielding the Newell-Whitehead-Segel equation \cite{ref:Newell_Whitehead1969JFM,ref:Segel1969JFM}. Also, the reaction-diffusion of electrolyte in a lead-acid battery cell can be modeled using $R(u) = -\sqrt{u}$ yielding a simplification of the \cite{ref:Harwood2011Batt}. Due to measurement limitations or errors, the initial and boundary conditions may have noise associated with them. This noise can create an unforeseen instability in the algorithm design, i.e., artificial \textit{oscillations} that propagate in the numerical solution. We seek to develop an \textit{oscillation-free} numerical scheme for solving initial-boundary-value problems of semilinear diffusion equations (\ref{eqn:semilinear}).

While stability is required for feasible numerical solution of the semilinear diffusion equation, even stable methods can elicit oscillatory behavior magnifying any initial measurement errors. One popular method for the diffusion equation is the Crank-Nicolson method \cite{ref:CN} which is second-order accurate in time and unconditionally stable, but may produce numerical oscillations under certain conditions and would not dampen numerical oscillations inherent in (\ref{eqn:semilinear}). On the other hand, an unconditionally stable and nonoscillatory solution can be constructed using fully implicit methods in time like the well-known backward Euler method \cite{ref:Burden}. Improving upon the backward Euler method, we develop a scheme for solving (\ref{eqn:semilinear}) that can preserve the same accuracy and unconditional stability as the Crank-Nicolson method, yet can also eliminate the unwanted oscillations.

While oscillation errors are often reduced or essentially eliminated with smoothing techniques such as essentially nonoscillatory methods \cite{ref:Oliveira2002, ref:Jiang2016}, in this paper, we utilize operator splitting and an oscillation-free scheme to eradicate any numerical oscillations. 
We break the problem (\ref{eqn:semilinear}) into a linear diffusion equation and a nonlinear reaction equation and then recombines the split solutions back together. The operator splitting, originally developed to solve parabolic equations modeling ocean sound propagation \cite{ref:split-NLS-1973}, is often used to stabilize the solution of stiff linear or nonlinear partial differential equations as linear split portions can be solved with unconditionally stable numerical methods \cite{ref:Zhang2016} or even analytically in the frequency domain using Fourier transforms \cite{ref:Lakoba2012, ref:Lin2012}. Instead of applying the Fourier transform for stability, as it can allow numerical oscillations in certain frequency bands \cite{ref:Lakoba2012,ref:Lakoba2015P1}, we develop a stable and oscillation-free scheme that overcomes any oscillatory behavior for numerically solving the linear diffusion portion of  (\ref{eqn:semilinear}). Our oscillation-free scheme combines two Euler implicit methods that are weighted appropriately. As such, our method preserves the same accuracy and unconditional stability as the Crank-Nicolson method but holds the oscillation-free property as the implicit Euler method. We develop our method in Section \ref{sec:method}, where we analytically show the stability and oscillation-free behavior of our scheme as well.

We implement our oscillation-free split-step method to a Fisher-type problem in section~\ref{sec:results}. To better represent the reality of measurement errors, randomly distributed noise is added to the initial condition  (inside the boundaries to keep the problem well-posed). 
Through the added measurement error, we illustrate the accuracy and usefulness of our oscillation-free scheme on nonlinear problems by producing oscillation-free solutions as compared to the Crank-Nicolson method. Hence, we amplify the difference between oscillatory and oscillation-free methods.

\section{Method}\label{sec:method} 

In this section we construct and analyze a numerical framework for solving (\ref{eqn:semilinear}) with second order accuracy in time and space, $O\left(\dt^{2}+\dx^{2}\right)$, which uses a split-step technique to solve simplified split equations with methods which are implicit or exact to ensure unconditional stable solutions and oscillation-free behavior at each step.

\subsection{Split-Step Technique}\label{sec:split-step} 
Instead of solving the semilinear diffusion equation directly (\ref{eqn:semilinear}), we use symmetric Strang splitting \cite{Strang} to solve a linear diffusion split equation and a nonlinear reaction split equation and recombine these split solutions back together. Thus, the semilinear diffusion equation (\ref{eqn:semilinear}) is partitioned,
\begin{equation}
\frac{1}{4}\pd ut+\frac{1}{2}\pd ut+\frac{1}{4}\pd ut=\frac{1}{2}R(u)+\pds{u}{x}{2}+\frac{1}{2}R(u),
\end{equation}
to create the split equations
\begin{equation}\label{eqn:split-equation-set}
\left\{ \frac{1}{4}\pd ut=\frac{1}{2}R(u),\frac{1}{2}\pd ut=\pds{u}{x}{2},\frac{1}{4}\pd ut=\frac{1}{2}R(u)\right\} ,
\end{equation}
that are successively solved over the set of subintervals
\begin{equation}\label{eqn:subintervals}
\left\{ \left[t_{n},t_{n+\frac{1}{4}}\right],\left[t_{n+\frac{1}{4}},t_{n+\frac{3}{4}}\right],\left[t_{n+\frac{3}{4}},t_{n+1}\right]\right\} .
\end{equation}
of the time domain $[0,T]$. 
The recombination scheme for the method as a whole is presented in Definition \ref{def:method} with the derivation and analysis of the method details given in the following subsections.
\begin{defn}[Recombination Scheme of Split Solutions]\label{def:method}
\begin{equation}\label{eqn:method}
\left\{ u_m^{n+1/4}=f_R\left(\frac{\dt}{4},u_m^n\right),\  u_m^{n+3/4}=f_D\left(\frac{\dt}{\dx^2}\right)u_m^{n+1/4},\  u_m^{n+1}=f_R\left(\frac{\dt}{4},u_m^{n+3/4}\right)\right\},
\end{equation}
\end{defn}
where $f_R(\Delta t,u_0)$ is the function chosen to solve the nonlinear reaction split equation (\ref{eqn:reaction-split}) over a given time step $\Delta t$ with initial condition $u_0$, and $f_D(r)$ is the linear function chosen to solve the diffusion split equation (\ref{eqn:diff-split-system}) dependent only upon step ratio $r = \frac{\dt}{\dx^2}$.

\subsection{Diffusion Split Solution}
First, we develop method (\ref{eqn:method}) for the Diffusion split solution. Using the second centered difference \cite{ref:Burden},
\begin{equation}
\pds{u_m}{x}{2} = \frac{1}{\dx^2}\left(u_{m-1}-2u_m+u_{m+1}\right)+O(\dx^2),
\end{equation}
the diffusion split equation can be written as the system of ordinary differential equations
\begin{eqnarray}\label{eqn:diff-split-system}
\frac12\pd{u}{t} = Du,\\
D=\mathrm{Tridiagonal}\left(\frac{1}{\dx^{2}},-\frac{2}{\dx^{2}},\frac{1}{\dx^{2}}\right),\nonumber
\end{eqnarray}
where $u\in\mathbb{R}^N$ and $D$ is an $N\times N$ tridiagonal matrix. 

Using Richardson extrapolation \cite{ref:Burden}, we combine the backward Euler scheme \cite{ref:Burden} over the full $\frac{\dt}{2}$ step, labeled $u_{F}^{n+\frac{3}{4}}$, and over $\frac{\dt}{4}$ steps twice, labeled $u_{H}^{n+\frac{3}{4}}$
to obtain the weighted solution, 
\begin{eqnarray}\label{eqn:weighted-solution}
u_{W}^{n+\frac{3}{4}} & \equiv &  f_D\left(\frac{\dt}{\dx^2}\right)u^{n+\frac14} =  2u_{H}^{n+\frac{3}{4}}-u_{F}^{n+\frac{3}{4}} \nonumber\\
 & = & \left(2(I-\frac{\dt}{2}D)^{-1}(I-\frac{\dt}{2}D)^{-1}-(I-\dt D)^{-1}\right)u^{n+\frac{1}{4}},
\end{eqnarray}
over the interval $[t_{n+\frac{1}{4}},t_{n+\frac{3}{4}}]$. The Taylor series expansion of (\ref{eqn:weighted-solution}) is
\begin{eqnarray}
u_{W}^{n+\frac{3}{4}} = \left( I+\dt D+\frac{\dt^{2}}{2}D^{2}+O(\dt^{3})\right) u^{n+\frac{1}{4}}.
\end{eqnarray}

\begin{lem}\label{thm:DSplitAccuracy}
Solution to the Diffusion split equation (\ref{eqn:diff-split-system}) is $O(\dx^2+\dt^2)$.
\end{lem}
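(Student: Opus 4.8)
The plan is to decompose the total error of the diffusion split solution into a spatial part and a temporal part, following the method-of-lines framework, and to show each is of the claimed order. I would first fix the spatial discretization: the centered second difference that defines $D$ approximates $\pds{u}{x}{2}$ with truncation error $O(\dx^2)$, as already recorded when $D$ was introduced in (\ref{eqn:diff-split-system}). Consequently the exact solution of the semidiscrete system $\frac12\pd{u}{t}=Du$ differs from the diffusion equation's solution, sampled at the grid points, by $O(\dx^2)$, provided the semidiscrete system is stable; here $D$ is symmetric with nonpositive eigenvalues, so the semigroup $e^{tD}$ is a contraction and this bound is controlled uniformly in $t$.

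Next I would treat the time stepping on the semidiscrete system. Over the diffusion subinterval $[t_{n+1/4},t_{n+3/4}]$ of length $\dt/2$, the exact propagator of $\frac12\pd{u}{t}=Du$ is $e^{\dt D}$, with expansion $I+\dt D+\frac{\dt^2}{2}D^2+\frac{\dt^3}{6}D^3+\cdots$. The weighted operator $f_D$ in (\ref{eqn:weighted-solution}) was already expanded as $I+\dt D+\frac{\dt^2}{2}D^2+O(\dt^3)$, so the two agree through the $\dt^2$ term and the local one-step truncation error is $O(\dt^3)$. This is precisely the gain from the Richardson combination $2u_H-u_F$: each backward-Euler constituent is only first order, but the weights $2$ and $-1$ are chosen so that the leading $O(\dt^2)$ discrepancies of $(I-\dt D)^{-1}$ and $(I-\frac{\dt}{2}D)^{-2}$ against $e^{\dt D}$ cancel.

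Then I would promote the local $O(\dt^3)$ bound to a global $O(\dt^2)$ bound by summing over the $T/\dt$ time steps. This is the step that genuinely requires stability rather than mere consistency: I would invoke unconditional stability of $f_D$, i.e. that its amplification operator has spectral radius at most one. Since $D$ is symmetric I can diagonalize and argue eigenvalue by eigenvalue: writing $z=\dt\lambda\le 0$ for an eigenvalue $\lambda$ of $D$, the amplification factor is $g(z)=2(1-z/2)^{-2}-(1-z)^{-1}$, and one checks $0\le g(z)\le 1$ on $(-\infty,0]$. Boundedness of the powers of $f_D$ lets the per-step errors accumulate additively rather than multiplicatively, yielding the standard telescoping estimate in which $N$ copies of an $O(\dt^3)$ error combine to $O(\dt^2)$ since $N=T/\dt$.

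Finally I would combine the two contributions by the triangle inequality to obtain a total error of $O(\dx^2+\dt^2)$. I expect the \emph{main obstacle} to be the rigorous accumulation argument in the third step: one must ensure the stability bound on $g(z)$ holds uniformly across all eigenvalues of $D$, including those of magnitude $O(\dx^{-2})$ that push $z$ large and negative, so the global constant does not degrade as $\dx\to 0$. Verifying $g(z)\in[0,1]$ for all $z\le 0$—which simultaneously underlies the oscillation-free claim—is therefore the crux, and I would establish it by elementary analysis of the rational function $g$ together with the fact that $D$'s eigenvalues are real and nonpositive.
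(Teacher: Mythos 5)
Your overall route is the same as the paper's: compare the Taylor expansion of the weighted operator $f_D$ against that of the exact semidiscrete propagator $e^{\dt D}$ to get a local $O(\dt^3)$ error, accumulate over $O(\dt^{-1})$ steps to obtain global $O(\dt^2)$, and add the $O(\dx^2)$ spatial truncation error. You are in fact more careful than the paper, which compresses the accumulation step into the heuristic ``each local error is divided by $\dt$'' and does not invoke stability inside this proof at all (its eigenvalue analysis appears only later, in the stability and oscillation subsection).

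There is, however, one concrete claim in your argument that is false, and it is exactly the one you call the crux. Writing $s=-z\ge 0$, your amplification factor is
\[
g(z)\;=\;\frac{2}{\left(1+s/2\right)^{2}}-\frac{1}{1+s}
\;=\;\frac{2(1+s)-\left(1+s/2\right)^{2}}{\left(1+s/2\right)^{2}(1+s)}
\;=\;\frac{1+s-s^{2}/4}{\left(1+s/2\right)^{2}(1+s)},
\]
and the numerator changes sign at $s=2+2\sqrt{2}\approx 4.83$, so $g(z)<0$ whenever $z<-(2+2\sqrt{2})$. This regime is genuinely reached: the eigenvalues of $D$ have magnitude up to nearly $4/\dx^{2}$, so $z=\dt\lambda$ extends down to about $-4r$, and already for $r>(1+\sqrt{2})/2\approx 1.21$ the high-frequency modes have negative amplification factors (the paper's own example runs at $r=20$). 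Hence you cannot establish $g(z)\in[0,1]$ on $(-\infty,0]$ by any analysis, elementary or otherwise. Fortunately the accuracy lemma never needs nonnegativity; it needs only uniformly bounded powers of $f_D$, i.e. $|g(z)|\le 1$ for all $z\le 0$, and that does hold: in either sign case one checks $\left|1+s-s^{2}/4\right|\le\left(1+s+s^{2}/4\right)(1+s)$, with equality only at $s=0$. So if you replace ``$0\le g(z)\le 1$'' by ``$|g(z)|\le 1$'' (A-stability), your telescoping argument and the whole proof go through; nonnegativity is relevant only to the separate oscillation-free claim, not to accuracy. (As a side remark, your expression for $g$ is the correct eigenvalue of the combination of inverses $2B(r/2)^{-2}-B(r)^{-1}$; it does not agree with the formula $\left(2\lambda_i^{*2}(r/2)-\lambda_i^{*}(r)\right)^{-1}$ stated later in the paper, which is the eigenvalue of a different operator, namely $\left(2B(r/2)^{2}-B(r)\right)^{-1}$.)
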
\begin{proof}
For each whole time step, the local truncation errors of our method come from numerically solving the diffusion split equation and the reaction split equation as well as the splitting error to recombine them. Since the number of time steps varies inversely with the step size, $N=O(\dt^{-1})$, each of the local errors is divided by $\dt$ before being summed up to compute the global method error. 

The exact solution of the diffusion split system (\ref{eqn:diff-split-system}), solved over the same subinterval, can be expanded using the Taylor series as
\begin{equation}
u_{E}^{n+\frac{3}{4}}  = e^{\dt D}u^{n+\frac{1}{4}} = \left( I+\dt D+\frac{\dt^{2}}{2}D^{2}+O(\dt^{3}) \right) u^{n+\frac{1}{4}}.\label{eqn:expansion-exact}
\end{equation}
Comparing expansions of the weighted Euler method (\ref{eqn:weighted-solution}) and exact solution (\ref{eqn:expansion-exact}), 
\begin{eqnarray}
\frac{\left|u_{E}^{n+\frac{3}{4}} - u_{W}^{n+\frac{3}{4}}\right|}{\dt} = O(\dt^{2})
\end{eqnarray}
Adding in the spatial discretization, the solution to the diffusion split system contributes $O\left(\dt^2+\dx^2\right)$ error.
\end{proof}

\subsection{Reaction Split Solution}
The reaction split equation from the partition (\ref{eqn:split-equation-set}),
\begin{equation}\label{eqn:reaction-split}
\pd{u_m}t=2R(t,u_m)
\end{equation}
where $m=1,2,...,N$ is the spatial index, is an uncoupled nonlinear system of first order ordinary differential equations. 

\begin{lem}\label{thm:RSplitAccuracy}
Solution to the Reaction split equation (\ref{eqn:reaction-split}) is at most $O(\dt^2)$.
\end{lem}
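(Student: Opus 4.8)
The plan is to mirror the Taylor-series comparison used for the diffusion split in Lemma~\ref{thm:DSplitAccuracy}, now applied to the uncoupled scalar ODE $\pd{u_m}{t}=2R(t,u_m)$ solved over the quarter-step $\frac{\dt}{4}$ by the chosen propagator $f_R$. First I would write the exact flow of the reaction equation over a single subinterval and expand it in powers of $\dt$. Because the equation is nonlinear, the successive time derivatives follow from the chain rule: $u'=2R(u)$, $u''=4R'(u)R(u)$, and so on, each a polynomial in $R$ and its derivatives. Since $R\in C^\infty(\mathbb{R})$ by Definition~\ref{def:semilinear}, every such term exists; and since the asymptotic stability assumed there confines $u$ to a bounded neighborhood of $\bar{u}_S$, these derivatives are uniformly bounded, which will be needed to obtain a uniform error constant.

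Next I would expand the numerical propagator $f_R\!\left(\frac{\dt}{4},u^n\right)$ in the same way and match it against the exact expansion term by term. If $f_R$ is chosen as a second-order one-step method, the two expansions must agree through the $\dt^2$ coefficient, so that the local truncation error over one quarter-step is $O\!\left((\dt/4)^3\right)=O(\dt^3)$; if instead $f_R$ solves the scalar ODE exactly by quadrature, the local error vanishes and the stated bound holds trivially, which is what the qualifier \emph{at most} in the statement accommodates. As in the proof of Lemma~\ref{thm:DSplitAccuracy}, I would then divide the local error by $\dt$ — equivalently sum the local contributions over the $N=O(\dt^{-1})$ time steps — to conclude that the reaction solve contributes a global error of order $\dt^2$. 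Unlike the diffusion split, there is no spatial-discretization term here: the reaction equations are uncoupled across the grid index $m$ and contain no spatial derivative, so no $O(\dx^2)$ contribution arises and the bound is purely $O(\dt^2)$.

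The main obstacle, relative to the linear diffusion case, is precisely the nonlinearity. In Lemma~\ref{thm:DSplitAccuracy} the generator was the constant matrix $D$, so the exact propagator was simply $e^{\dt D}$ with a clean series; here the repeated chain-rule differentiation produces nested products such as $R'(u)R(u)$ and $R''(u)R(u)^2$ whose matching against the propagator's expansion is more delicate, and controlling the $O(\dt^3)$ remainder uniformly in $t$ requires the boundedness of $R$ and its derivatives on the invariant neighborhood of $\bar{u}_S$. Once that uniform bound is in hand, the summation argument closes the estimate exactly as before, giving the claimed at-most-$O(\dt^2)$ accuracy.
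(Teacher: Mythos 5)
Your proposal is correct and takes essentially the same approach as the paper: the paper's proof likewise splits into the exact-solution case (no additional error) and the numerical case, in which a second-order integrator or root finder (Heun's method, inverse quadratic interpolation, Runge--Kutta) contributes at most $O(\dt^2)$ --- the paper simply cites this standard accuracy result from a numerical analysis text, whereas you spell out the underlying Taylor-matching and local-to-global summation argument. The only material in the paper's proof that you do not touch is an optional alternative route (rewriting the reaction equation as $\pd{u_m}{t}=A(t,u_m)u_m$ and solving it exactly via Picard iteration and the Magnus expansion), which is not needed for the stated bound.
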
\begin{proof}
An exact solution to the reaction split equation (\ref{eqn:reaction-split}) contributes no additional error, and when needed, a numerical root finding method, like second order Inverse Quadratic Interpolation, or integration methods, like second order Heun's method or fourth order Runge-Kutta method, contribute at most $O\left(\dt^2\right)$ \cite{ref:Burden}.

For more options, system (\ref{eqn:reaction-split}) can be written in the general form 
\begin{equation}\label{eqn:reaction-generalform}
\pd{u_m}t=A(t,u_m)u_m
\end{equation}
where $A(t,u_m) = \frac{2R(t,u_m)}{u_m}$. Equation (\ref{eqn:reaction-generalform}) can then be solved exactly using Picard iterations and the solution can be written in the exponential form
\begin{equation}\label{eqn:reaction-magnus}
u_m^{n+1}(t)=\exp(\Omega(t^n,u_m^n))u_m^n,
\end{equation}
where $\Omega(t,u)$ is the Magnus expansion series formed by subsequent Picard iterations of $A(t,u(t))$ \cite{Blanes}, initializing with $t=t^n,u(t)=u_m^n$. Bounding the matrix norm of the magnus expansion, $||\Omega||_2<\pi$, ensures series convergence of the matrix exponential in solution (\ref{eqn:reaction-magnus}).

The example provided in section \ref{sec:results} and the models referenced in Section \ref{sec:intro}, however, do not require Magnus expansions as the reaction split equation (\ref{eqn:reaction-split}) is separable and $R(u)^{-1}$ can be integrated exactly. If $u$ can be solved for explicitly, as is the case in the models referenced, the exact solution $u(t)=f_R(t)$ for the reaction split equation (\ref{eqn:reaction-split}) can be solved for exactly. If only an implicit solution can be found, however, a standard root finder like inverse quadratic interpolation can be used to approximate the explicit solution over each subinterval with $O(\dt^2)$ accuracy \cite{ref:Burden}. Note, if $R(u)^{-1}$ cannot be integrated exactly, a method such as Heun's method could approximate it with $O(\dt^2)$ accuracy \cite{ref:Burden}.
\end{proof}

\subsection{Error Analysis}\label{sec:error_analysis} 
For each whole time step, the local truncation errors of our method come from numerically solving the diffusion split equation and the reaction split equation as well as the splitting error to recombine them. Since the number of time steps varies inversely with the step size, $N=O(\dt^{-1})$, each of the local errors is divided by $\dt$ before summed up to compute the global method error. 

\begin{thm}\label{thm:accuracy}
Method (\ref{eqn:method}) has maximum combined global error of $O(\dx^2+\dt^2)$.
\end{thm}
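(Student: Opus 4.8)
The plan is to decompose the global error of method~(\ref{eqn:method}) into three independent sources and bound each in turn: the error incurred in solving the diffusion split, the error in solving the reaction split, and the operator-splitting error produced when the three substeps are recombined. Lemma~\ref{thm:DSplitAccuracy} already controls the first source, showing the diffusion solve contributes $O(\dt^2+\dx^2)$, and Lemma~\ref{thm:RSplitAccuracy} controls the second, bounding the reaction solve by $O(\dt^2)$. What remains, and is the crux of the theorem, is to establish that the symmetric Strang recombination itself contributes no worse than $O(\dt^2)$ globally.

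To bound the splitting error I would represent each substep of~(\ref{eqn:method}) as a flow map: write $\Phi^R_\tau$ for the exact (or exactly solved) reaction flow of $u_t=2R(u)$ over time $\tau$, and $\Phi^D_\tau=e^{2\tau D}$ for the diffusion flow of $\frac12 u_t = Du$. With the subinterval lengths in~(\ref{eqn:subintervals}), one full step of the method is the composition $\Phi^R_{\dt/4}\circ\Phi^D_{\dt/2}\circ\Phi^R_{\dt/4}$, which reduces to the standard symmetric form $e^{\frac{\dt}{2}\mathcal R}\,e^{\dt\mathcal D}\,e^{\frac{\dt}{2}\mathcal R}$ acting on $u^n$. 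I would then Taylor expand both this composition and the exact flow $\Phi^{D+R}_{\dt}$ of the full semidiscrete system in powers of $\dt$. The $O(\dt)$ terms agree by construction, and the symmetric placement of the two half reaction steps forces the $O(\dt^2)$ terms --- which for linear operators are the commutator $[\mathcal D,\mathcal R]$ --- to cancel, leaving a local splitting error of $O(\dt^3)$. Because $R\in C^\infty(\mathbb R)$, the reaction flow is smooth in $u$, so the nonlinear case is handled by local linearization: replacing $R$ by its Jacobian about the current state reduces the comparison to the linear Baker--Campbell--Hausdorff expansion, whose leading surviving term is the double bracket appearing at order $\dt^3$.

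With the three temporal local errors in hand --- $O(\dt^3)$ each from diffusion, reaction, and splitting, together with the $O(\dx^2)$ spatial discretization error --- I would convert them to global errors. As noted in Section~\ref{sec:error_analysis}, there are $N=O(\dt^{-1})$ steps, so each local $O(\dt^3)$ term accumulates to $O(\dt^2)$; summing, the total global error is $O(\dt^2+\dx^2)$. The step I expect to be the main obstacle is justifying this linear accumulation rigorously, since dividing a local error by $\dt$ tacitly assumes errors do not amplify as they propagate. This requires a stability (Lax-equivalence-type) argument: one must invoke the unconditional stability of the weighted Euler diffusion solve and the boundedness of the reaction flow --- guaranteed near the asymptotically stable steady state $\bar{u}_S$ of Definition~\ref{def:semilinear} --- to ensure the one-step maps are contractive enough that the accumulated error remains $O(\dt^2+\dx^2)$ rather than growing. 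Controlling the nonlinear splitting commutators under this propagation, uniformly in the noisy initial data, is the delicate part.
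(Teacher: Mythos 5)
Your proposal is correct and follows essentially the same route as the paper: the identical three-way decomposition into diffusion, reaction, and splitting errors, with Lemma~\ref{thm:DSplitAccuracy} and Lemma~\ref{thm:RSplitAccuracy} covering the first two sources and the symmetric Strang recombination contributing the remaining $O(\dt^2)$. The only difference is self-containedness rather than substance: where the paper cites \cite{Strang} and \cite{Harwood_Dissertation} for the splitting bound and converts local to global error by simply dividing by $\dt$, you sketch the commutator-cancellation argument inline and correctly flag that this local-to-global step tacitly requires a stability (Lax-type) argument --- both worthwhile refinements of the same proof, not a different one.
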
\begin{proof}
Including the spatial discretization, by Lemma \ref{thm:DSplitAccuracy} the solution to the diffusion split system contributes $O\left(\dt^2+\dx^2\right)$ error. An exact solution to the reaction split equation (\ref{eqn:reaction-split}) contributes no additional error, and contributes at most $O\left(\dt^2\right)$ by Lemma \ref{thm:RSplitAccuracy} when numerical root finding or integration methods are needed.

Recombining these split solutions over the subintervals (\ref{eqn:subintervals}) using symmetric Strang splitting technique contributes an additional $O\left(\dt^2\right)$ splitting error. The accuracy of this splitting error is explained in \cite{Strang} and proven specifically for semilinear diffusion equations in \cite{Harwood_Dissertation}.

Combining the $O\left(\dt^2\right)$ splitting error, $O\left(\dt^2+\dx^2\right)$ diffusion split solution error, and at most $O\left(\dt^2\right)$ reaction split solution error, the global error is $O\left(\dt^2+\dx^2\right)$.
\end{proof}

Note, second order accuracy in time is the maximum accuracy for Strang splitting as proven by the Sheng-Suzuki Theorem \cite{Sheng,Suzuki}, so our method has attained maximum accuracy in time with such splitting. Predictor-corrector methods, such as the integral deferred correction method, may be used to reduce the splitting error to obtain even higher order accuracy in time \cite{refChristlieb2014}.

\subsection{Stability and Oscillatory Analysis\label{sub:osc}}
When the reaction split equation can be solved exactly, as is the case for each of the motivating model equations in Section \ref{sec:intro}, error is only be generated by solving the diffusion split equation and recombining the two split solutions. For the general case, however, we analyze linear stability of the reaction split equation about an asymptotically stable equilibrium point whose existence is proven in Theorem \ref{thm:stable-eqpt}.

\begin{thm}\label{thm:stable-eqpt}
Given a semilinear diffusion equation with an asymptotically stable steady-state solution as defined by (\ref{def:semilinear}), the reaction split equation (\ref{eqn:reaction-split})  has at least one asymptotically stable equilibrium point $\bar{u}_R$. Further, $\frac{d}{du}R(\bar{u}_R)<0$. \end{thm}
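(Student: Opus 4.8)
The plan is to first reduce the statement to a purely pointwise fact about the reaction term, and then to extract that fact from the asymptotic stability of $\bar{u}_S$ via linearization. Since the reaction split equation $\partial_t u_m = 2R(u_m)$ is an autonomous scalar ODE at each grid point, its equilibria are exactly the roots of $R$, and such a root $\bar{u}_R$ is asymptotically stable for the ODE precisely when $\frac{d}{du}\bigl(2R(\bar{u}_R)\bigr) = 2R'(\bar{u}_R) < 0$, i.e.\ when $R'(\bar{u}_R) < 0$. Thus the two assertions of the theorem coincide, and it suffices to produce a single root of $R$ at which $R$ is strictly decreasing.

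Next I would use the structure of the steady state. Writing the steady-state equation as $\bar{u}_S'' = -R(\bar{u}_S)$ with $\bar{u}_S = 0$ on $\partial I$, and taking $M = \max_{x\in I}\bar{u}_S(x)$ attained at an interior point $x^\ast$, the conditions $\bar{u}_S'(x^\ast)=0$ and $\bar{u}_S''(x^\ast)\le 0$ give $R(M) = -\bar{u}_S''(x^\ast) \ge 0$. Hence $R$ is nonnegative at the top of the range of $\bar{u}_S$. Feeding this into the asymptotic stability hypothesis---which in particular keeps solutions started near $\bar{u}_S$ bounded and returning to $\bar{u}_S$---forces $R$ to change sign from nonnegative to negative at some value $\bar{u}_R \ge M$; this downward crossing is the candidate equilibrium, and at a simple such root $R'(\bar{u}_R) \le 0$.

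To upgrade this to the strict inequality $R'(\bar{u}_R) < 0$ and to make the stability link rigorous, I would linearize the PDE about $\bar{u}_S$. The linearized generator is the Sturm--Liouville operator $L\phi = \phi'' + R'(\bar{u}_S(x))\,\phi$ under Dirichlet conditions, which is self-adjoint with a real, simple principal eigenvalue $\lambda_1$ possessing a positive eigenfunction. Asymptotic stability of $\bar{u}_S$ forces $\lambda_1 < 0$. I would then combine $\lambda_1 < 0$ with the sign information above---using that $\bar{u}_S'$ solves $L(\bar{u}_S') = 0$ and changes sign once across $x^\ast$---to exclude the degenerate case $R'(\bar{u}_R)=0$ and conclude $R'(\bar{u}_R) < 0$.

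The hard part will be the middle step: translating the \emph{nonlocal} spectral information about $L$, whose potential $R'(\bar{u}_S(x))$ samples $R'$ along the \emph{entire} profile of $\bar{u}_S$, into a \emph{pointwise} statement about $R'$ at the single value $\bar{u}_R$. The cleanest route is to argue that a stable, bounded attractor $\bar{u}_S$ cannot coexist with $R$ staying positive for all $u \ge M$ (that would drive growth and preclude the return demanded by stability), which guarantees the positive-to-negative crossing; one then verifies non-degeneracy of the crossing using $\lambda_1 < 0$. Care is needed here because the stability in Definition~\ref{def:semilinear} is stated locally, so the boundedness-and-return argument must be drawn from a neighborhood of $\bar{u}_S$ rather than from a global growth assumption on $R$.
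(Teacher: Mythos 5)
Your plan has a genuine gap, and it sits exactly where you suspected: the middle step cannot be closed by the spectral argument you propose. The potential of your linearized operator is $R'(\bar{u}_S(x))$, so $\lambda_1<0$ only constrains $R'$ as sampled along the \emph{range} of $\bar{u}_S$, i.e.\ on $[0,M]$; but your candidate root satisfies $\bar{u}_R\geq M$, so it lies at the edge of, or strictly outside, that range, and the principal eigenvalue carries no pointwise information about $R'(\bar{u}_R)$ whatsoever. In particular, a degenerate downward crossing such as $R(u)\approx -(u-\bar{u}_R)^3$ near $\bar{u}_R$ gives an asymptotically stable equilibrium of the scalar ODE with $R'(\bar{u}_R)=0$, and nothing in the $\lambda_1<0$ argument excludes it. A second, smaller issue is your opening reduction: asymptotic stability of an equilibrium of $\partial_t u_m = 2R(u_m)$ is \emph{not} equivalent to $R'(\bar{u}_R)<0$ (the same cubic example is stable with vanishing derivative), so the two assertions of the theorem do not coincide; the direction you actually use---exhibiting a root where $R'<0$ proves both claims---is fine, but the ``precisely when'' is false. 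Finally, the sign-crossing claim itself (``stability forces $R$ to change sign at some $\bar{u}_R\geq M$'') is asserted rather than proven, and proving it from the local stability hypothesis of Definition~\ref{def:semilinear} is the real content of the theorem, not a detail.

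For comparison, the paper's proof is shorter and more elementary, and works at the level of the reaction term alone: it argues by contradiction that if $R$ had no (stable) equilibrium, then the fixed sign of $R$ would drive $u_t$, and hence $\|u(x,t)\|_2$, without bound, contradicting the assumed asymptotically stable steady state; it then reads off from scalar-ODE asymptotic stability the sign pattern $R(\bar{u}_R-\delta)>0$, $R(\bar{u}_R+\delta)<0$, and concludes $R'(\bar{u}_R)<0$ from the symmetric difference quotient. You should be aware that the strict inequality is delicate even there: the sign pattern by itself only yields $R'(\bar{u}_R)\leq 0$ (again the cubic example), so the final step of the paper's argument shares the weakness that your spectral route was trying, unsuccessfully, to repair. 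Your steady-state profile observation ($R(M)\geq 0$ at the interior maximum) is correct and is a nice piece of structure the paper does not use, but as the centerpiece of a proof it leads into the nonlocal-to-pointwise translation problem that you correctly identified as the hard part and did not resolve.
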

\begin{proof}
Independent of space, the reaction split equation (\ref{eqn:semilinear}) is componentwise equivalent to an autonomous ordinary differential equations. Either there are no stable equilibrium points, or there is at least one. If no equilibrium points, then the reaction function always positive or always negative. In the semilinear equation (\ref{eqn:semilinear}),  this has an equivalent effect on the time derivative $u_t(x,t)$ forcing  $||u(x,t)||_2\to \infty$ as $t\to\infty$. If no equilibrium points are stable, then the divergence of the reaction function will similarly drive the time derivative, and thus the solution, without bound. Since the semilinear equation (\ref{eqn:semilinear}) has an asymptotically stable steady-state solution, the reaction function must have at least one asymptotically stable equilibrium solution $\bar{u}_R$. That is $R(\bar{u}_R)=0$ and given $\epsilon>0$, $R(\bar{u}_R-\delta)>0$ and $R(\bar{u}_R+\delta)<0$ for all $\delta<\epsilon$ to be attracting in an $\epsilon$-neighborhood of $\bar{u}_R$. Then using the midpoint definition of the derivative and the fact that $R(\bar{u}_R-\delta)>0$ and $R(\bar{u}_R+\delta)<0$ for asymptotically stable equilibrium point $\bar{u}_R$, the Jacobian linearization coefficient is negative, $$R'(\bar{u}_R) = \lim_{2\delta\to 0}\frac{R(\bar{u}_R+\delta)-R(\bar{u}_R-\delta)}{2\delta}<0.$$ 
\end{proof}


Having proven the existence of an asymptotically stable equilibrium point $\bar{u}_R$, we study the linearized stability and oscillatory behavior of the combined solution via eigenvalues. The propagation of errors in the recombined solution is determined by the product of the eigenvalues of the methods solving each split problem \cite{Strang}. 

A linearization of the reaction split equation about a stable equilibrium point $\bar{u}_R$ can be written as
\begin{equation}
\pd {u}t = \frac{dR}{du}(\bar{u})(u-\bar{u}) \equiv -L^2(u-\bar{u}).
\end{equation}
Note, this linearization would contribute $O\left(\dt\right)$ error if implemented, but we only use it to evaluate the local behavior of the solution as it converges. The exact solution of this local linearization is
\begin{equation}\label{eqn:linearized}
u_m(t_f) = e^{-(t_f-t_0) L^2}(u_m(t_0)-\bar{u})+\bar{u}.
\end{equation}
Solving (\ref{eqn:linearized}) over the first and last subintervals (\ref{eqn:subintervals}), yields a common amplifying eigenvalue of $e^{-\dt L^2/2}$. 

Now we consider the eigenvalues of the diffusion solution. Defining the $N\times N$ matrix function
\begin{equation}
B(r) \equiv I-\dt D = \mathrm{Tridiagonal}(-r,1+2r,-r),~ r \equiv \frac{\dt}{\dx^{2}}.
\end{equation}
the components of the weighted solution (\ref{eqn:weighted-solution}) can be written as
\begin{eqnarray}
u_F^{n+\frac{3}{4}} & = & \left(B(r)\right)^{-1}u^{n+\frac{1}{4}},\\
u_H^{n+\frac{3}{4}} & = & \left(B(r/2)\right)^{-1}\left(B(r/2)\right)^{-1}u^{n+\frac{1}{4}}.
\end{eqnarray}
As a tridiagonal matrix \cite{Yueh_2005}, the eigenvalues of $B(r)$ are
\begin{equation}
\lambda^*_{i}(r)=1+4r\cos^{2}\left(\frac{i\pi}{2\left(N+1\right)}\right),\ i=1,2,...,N
\end{equation}
and eigenvectors, $u^{(i)}$, are specified by component $k$ as
\begin{equation}\label{eqn:eigenvectors}
u_{k}^{(i)}=\mbox{ sin}\left(\frac{ik\pi}{N+1}\right),\ i=1,2,\dots,N,\ k=1,2,...,N.
\end{equation}

Because these eigenvectors are independent of $r$, matrices $B(r)$,  $B(r/2)$, as well as their inverses,
have the same eigenvectors \cite{Ref:Watkins}. Thus, the combined matrix for the solution of the weighted method (\ref{eqn:weighted-solution}),
\begin{eqnarray}
u_W^{n+\frac{3}{4}} & = & \left( 2\left(B(r/2)\right)^{-1}\left(B(r/2)\right)^{-1} - \left(B(r)\right)^{-1}\right) u^{n+\frac{1}{4}},
\end{eqnarray}
has the same eigenvectors with eigenvalues
\begin{eqnarray}
\lambda_{i}(r) & = & \left(2\lambda_{i}^{*2}(r/2)-\lambda ^* _{i}(r)\right)^{-1}\nonumber\\
 & = & \left(\left(1+2r\cos^2\left(\frac{i\pi}{2N+2}\right)\right)^{2}+4r^{2}\cos^4\left(\frac{i\pi}{2N+2}\right)\right)^{-1},
\end{eqnarray}
for $i=1,2,...,N$.
Note, since $\left(\lambda_{i}(r)\right)^{-1}\geq1$, we have
\begin{equation}
0<\lambda_{i}(r)\leq1.
\end{equation}

Thus, the max eigenvalue, $\lambda^C$ of the combined solution (\ref{eqn:weighted-solution}) is the product of the eigenvalues of each solution step,
\begin{eqnarray*}
 \left|\lambda_{\rm max}^C\right| &=& \left|e^{-\dt L^2/2} \max_i \left(\lambda_i(r)\right)e^{-\dt L^2/2}\right|\\
 &<& \max_i \left(\lambda_i(r)\right)\\
 &\leq& 1.
\end{eqnarray*}
Since the combined eigenvalue is less than one in magnitude, the idealized linear solution of our method is stable, proving stability of method about the convergent equilibria.
Since the eigenvalues of each split solution are nonnegative, the combined eigenvalue is also nonnegative $0\leq\lambda^C$, prohibiting oscillation \cite{Harwood_Dissertation}. Hence our method is not only unconditionally stable, it is also unconditionally oscillation-free.

\section{Numerical Example  \label{sec:results}} 

In this section we implement our proposed method on a Fisher-type equation defined on a bounded domain,
\begin{eqnarray}\label{eqn:fisher}
&& \pd{u}{t} = \pds{u}{x}{2}+u(1-u),\ 0 < x < 10, \label{eqn:fisher-eq}\\
&& u(0,t) = 0,\qquad u(10,t) = 0, \label{eqn:fisher-bc}\\
&& u(x,0) = \sin \left(\frac{\pi x}{10}\right). \label{eqn:fisher-ic}
\end{eqnarray}
Notice that both reaction smoothness, $R(u)=u(1-u)\in C^\infty(\mathbb{R})$, and existence of an asymptotically stable equilibrium point for the reaction split equation (\ref{eqn:reaction-split}), $R(1)=0$ and $R'(1)<0$, are satisfied.

The oscillation-free split-step method is implemented in each global time step as
\begin{eqnarray}
\frac14\pd{u}{t} &=& \frac12 u(1-u),\textrm{ over } [t_n,t_{n+1/4}],\\
 \frac12\pd{u}{t} &=& \pds{u}{x}{2},\textrm{ over } [t_{n+1/4},t_{n+3/4}],\\
 \frac14\pd{u}{t} &=& \frac12 u(1-u),\textrm{ over } [t_{n+3/4},t_{n+1}],
\end{eqnarray}

In this case the reaction split equation is a separable first order differential equation and can be solved exactly over a given time interval $[t_\mathrm{start},t_\mathrm{end}]$ with step size $\Delta t_\mathrm{se} = t_\mathrm{end} - t_\mathrm{start}$ as
\begin{equation}
u\left(x,t_\mathrm{end}\right) = f_R(\dt_{\rm se},u_0)
\equiv \left(1+\frac{1-u_0}{u_0}\exp\left(-2\Delta t_{\rm se}\right)\right)^{-1},
\end{equation}
where the initial condition is defined at the starting time: $u_0=u\left(x,t_\mathrm{start}\right)$.
\subsection{Stability and Oscillatory Verification}
Before analyzing numerical stability, we consider the stability of equilibrium points in forming the steady-state solution. Seeking insight into the shape of a steady-state solution to the semilinear equation (\ref{eqn:semilinear}), we rewrite $\bar{u}_S''(x)=\bar{u}_S(\bar{u}_S-1)$ as the system of first order equations $\left(\bar{u}_S\right)'=\bar{u}_S',\  \left(\bar{u}_S'\right)'=\bar{u}_S^2 -\bar{u}_S $ which has equilibrium points $(\bar{u}_S,\bar{u}_S')=(0,0), (1,0)$ and whose Jacobian matrix is
\begin{equation}
J(\bar{u}_S, \bar{u}_S') = 
\left[\begin{array}{cc} 0 & 1\\ 2\bar{u}_S-1 & 0\end{array}\right].
\end{equation}
The eigenvalues of Jacobian $J(0,0)$ are purely imaginary while those of $J(1,0)$ are real and opposite in sign, revealing the equilibrium points to be a semistable nonhyperbolic center and an unstable saddle point, respectively. Based upon this analysis, we predict a curve bounded between $u(x,t)=0$ and $u(x,t)=1$ which is repelled by $\bar{u}_S=1$ at the boundaries in space. Figure~\ref{fig:1}(b) demonstrates this steady-state solution bound between equilibrium points $\bar{u}_S=0$ and $\bar{u}_S=1$ with $\bar{u}_S'=u_x(x,t)=0$ near $u(x,t)=1$. Note that the steady-state solution is repelled in time by $\bar{u}_S=0$ as it is repelled in space by $\bar{u}_S=1$, allowing it to bow upwards in time and increase its concavity at the boundary.

In comparison to our numerical method \ref{eqn:method}, we will demonstrate that the Crank-Nicolson method \cite{ref:CN} does allow numerical oscillations. The eigenvalues for the Crank-Nicolson method \cite{Harwood_Dissertation},
\begin{equation}
\lambda^{CN}_i=\frac{1-2r\sin^2(\frac{i\pi}{2})}{1+2r\sin^2(\frac{i\pi}{2})},
\end{equation}
show that it is unconditionally stable (satisfied by $|\lambda^{CN}_{i}(r)|\leq1$), but unlike our weighted scheme (\ref{eqn:weighted-solution}), when $r>2$ they do not satisfy the nonnegative eigenvalue requirement. Consequently, the solution by the Crank-Nicolson method is not guaranteed to be oscillation-free, as recently analyzed in \cite{ref:Lakoba2015P1}, for example. Fig.~\ref{fig:1} also demonstrates the existence of such oscillations from the Crank-Nicolson method for adequately large $r$ ratios.

In order to compare our weighted Euler method to the Crank-Nicolson method, we implemented each as the method for solving the diffusion portion (\ref{eqn:diff-split-system}), keeping the exact solution for the reaction portion (\ref{eqn:reaction-split}), and using Strang symmetric splitting \cite{Strang} to recombine the solutions.

Figure \ref{fig:1} shows the results from these two schemes for spatial step, $\dx=0.1$, and temporal step, $\dt=0.2$. Figure \ref{fig:1}(a) plots the solution at  t=0, 1, and 50 with the Crank-Nicolson scheme, which show effect of initial stable oscillations from the Crank-Nicolson solution to the linear portion propagate through the splitting recombination. Such oscillations grow and distort the solution, breaking the stability superficially preserved by each solution step. Figure \ref{fig:1}(b), however,
shows that the solution with our weighted backward Euler method, also plotted for t=0, 1, and 50, is oscillation-free and stable, even under the same splitting recombinations.

\begin{figure}
\includegraphics[width= 7. cm]{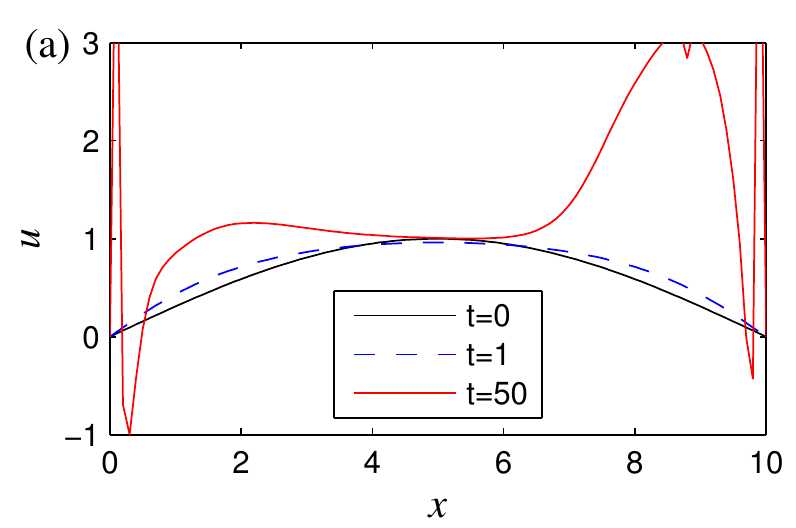}
\includegraphics[width= 7. cm]{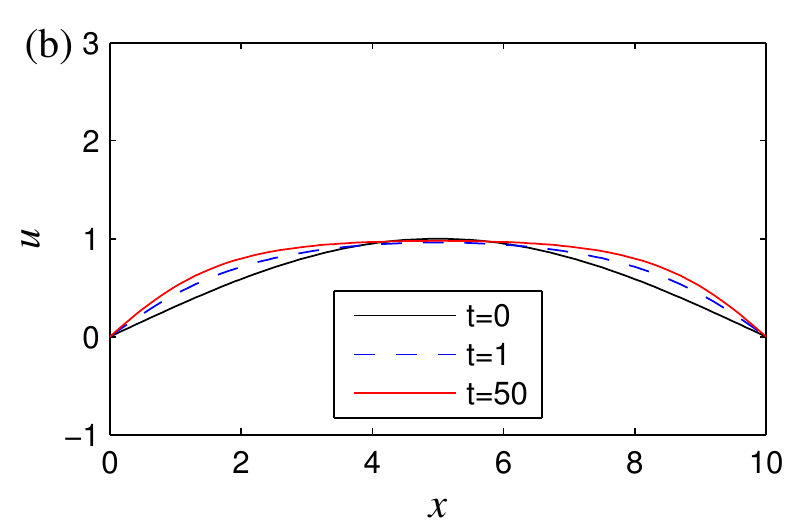}
\caption{(Color online). Comparing oscillatory time-lapsed behavior of (a) Crank-Nicolson and (b) our Weighted Backward Euler solutions
to the initial-boundary-value problem Eqs.~(\ref{eqn:fisher-eq})-(\ref{eqn:fisher-ic}).\label{fig:1}}
\end{figure}

\begin{figure}[t]
\includegraphics[width= 7. cm]{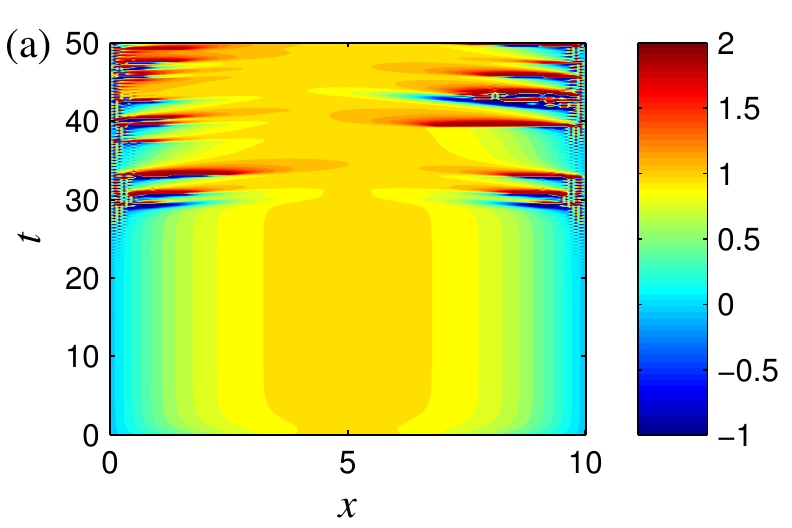}
\includegraphics[width= 7. cm]{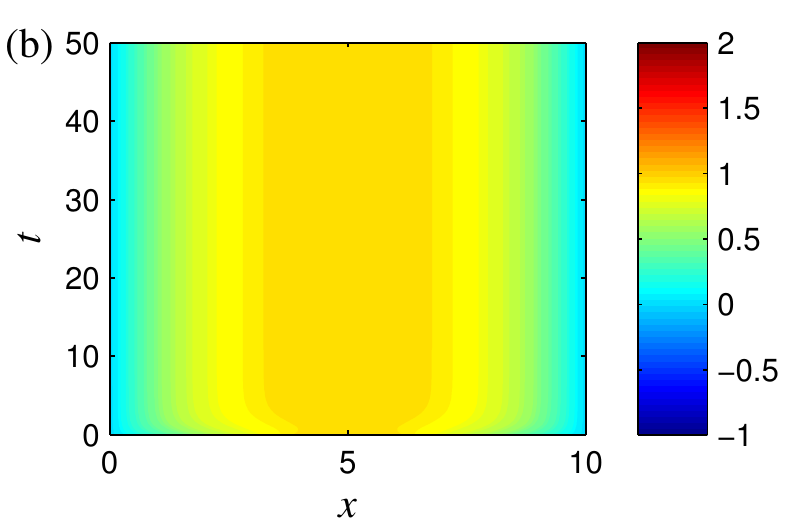}
\caption{(Color online). Comparing solution behavior with (a) Crank-Nicolson and (b) our Weighted Backward Euler methods
to the initial-boundary-value problem Eqs.~(\ref{eqn:fisher-eq})-(\ref{eqn:fisher-ic}) over the whole parameter space.\label{fig:2}}
\end{figure}

We further plot the full solution over the whole parameter space of x and t in Fig.~\ref{fig:2}. The plots clearly illustrate the oscillatory behavior when using the Crank-Nicolson method, and the oscillation-free behavior of our solution. The solution of the Crank-Nicolson method reaches a maximum oscillation amplitude on the order of $10^3$ in magnitude for $t\in[0,50]$, and continues to grow unbounded as time increases. Figure \ref{fig:2}(a) only  shows the solution $u(x,t)\in[-1, 2]$ for the convenience of direct comparison with oscillation-free solution in Fig.~\ref{fig:2}(b).

\subsection{Accuracy Verification\label{sub:verification}}

To verify the spatial and temporal accuracy of our method without having an exact solution for comparison, we examine the convergence of the solution by systematically diminishing the spatial and temporal step sizes, respectively while keeping the other fixed. The accuracy is measured by looking at the convergence of the difference between subsequent solutions as an indicator for the convergence of the numerical solution
to the actual solution, i.e. the accuracy of the numerical method. For example, if a mesh step size is shrunk by a factor of $m$, for a second order method, we expect the solution difference to shrink correspondingly by a factor of $m^{2}$ \cite{Ref:Watkins}. In this case, the accuracy in both space and time converge to $2^2=4$ as the relevant step size is shrunk by a factor of $m=2$.

Accuracy could be checked at any point in space or time, but for consistency we measure the spatial and temporal convergence of the solution for all mesh sizes at the midpoint of the solution curve after reasonable convergence of the stable solution, that is $(x,y)=(5,25)$, as shown in Figure \ref{fig:1}(b).

\begin{table}[t]
\begin{center}$\begin{array}{|ccc|c||ccc|c|}
\hline H & \;\dx\; & \;\dt\; & \;\frac{|u_{H-1}-u_{H-2}|}{|u_{H}-u_{H-1}|}\; & K & \;\dx\;  & \;\dt\; & \;\frac{|u_{K-1}-u_{K-2}|}{|u_{K}-u_{K-1}|}\;\\
\hline 0 & 1 & 1 & -- & 0 & 1 & 1 & --\\
1 & \frac{1}{2} & 1 & -- & 1 & 1 & \frac{1}{2} & --\\
2 & \frac{1}{4} & 1 & 4.233 & 2 & 1 & \frac{1}{4} & 3.223\\
3 & \frac{1}{8} & 1 & 4.051 & 3 & 1 & \frac{1}{8} & 4.200\\
4 & \frac{1}{16} & 1 & 4.013 & 4 & 1 & \frac{1}{16} & 4.931\\
5 & \frac{1}{32} & 1 & 4.003 & 5 & 1 & \frac{1}{32} & 5.246\\
6 & \frac{1}{64} & 1 & 4.001 & 6 & 1 & \frac{1}{64} & 5.118\\
7 & \frac{1}{128} & 1 & 4.000 & 7 & 1 & \frac{1}{128} & 4.785\\
8 & \frac{1}{256} & 1 & 4.000 & 8 & 1 & \frac{1}{256} & 4.473\\
9 & \frac{1}{512} & 1 & 4.000 & 9 & 1 & \frac{1}{512} & 4.261\\
10 & \frac{1}{1024} & 1 & 4.000 & 10 & 1 & \frac{1}{1024} & 4.138\\
11 & \frac{1}{2048} & 1 & 4.000 & 11 & 1 & \frac{1}{2048} & 4.062
\\\hline \end{array}$
\end{center}
\caption{\label{tb:accuracy} Numerical calculations of the solution at $x=5$ and $t=25$, using a series of spatial and temporal step sizes, verify the second order accuracy in time and space, $O\left(\triangle t^2 + \triangle x^2\right)$, for our oscillation-free split-step method.}
\end{table}

Table \ref{tb:accuracy} shows the computed convergence rate for each refinement of the mesh. The spatial convergence factor, $\frac{|u_{H-1}-u_{H-2}|}{|u_{H}-u_{H-1}|}$, computes the shrinking factor between the solution improvement at the refined spatial step size and that at the previous step, while the temporal convergence, $\frac{|u_{K-1}-u_{K-2}|}{|u_{K}-u_{K-1}|}$, computes the analogous factor for each refined time step. Here, $u_{H}$ and $u_{K}$ represent the solution, $u(5,25)$, when the spatial step size is $\dx=\frac{1}{2^{H}}$, and the temporal step size is $\dt=\frac{1}{2^{K}}$, respectively. Both $H$ and $K$ run from $0$ up to $11$, so that the mesh step sizes $\dx$ and $\dt$ independently run from $1$ down to $\frac{1}{2048}$, while the other step size is held fixed at 1. The left fourth column shows the convergence of the shrinking factor
to $4$ while the space step is shrunk by $2$. Correspondingly, the right fourth column shows the convergence, albeit slower, of the shrinking
factor to $4$ while the time step is also shrunk by $2$. Table \ref{tb:accuracy} hence demonstrates the second order accuracy in space and time of our method. The quadratic convergence, specifically $O\left(\dt^{2}+\dx^{2}\right)$, is further represented graphically in Figure $\ref{fig:3}$.

\begin{figure}[t]
\centerline{\includegraphics[width= 8.6 cm]{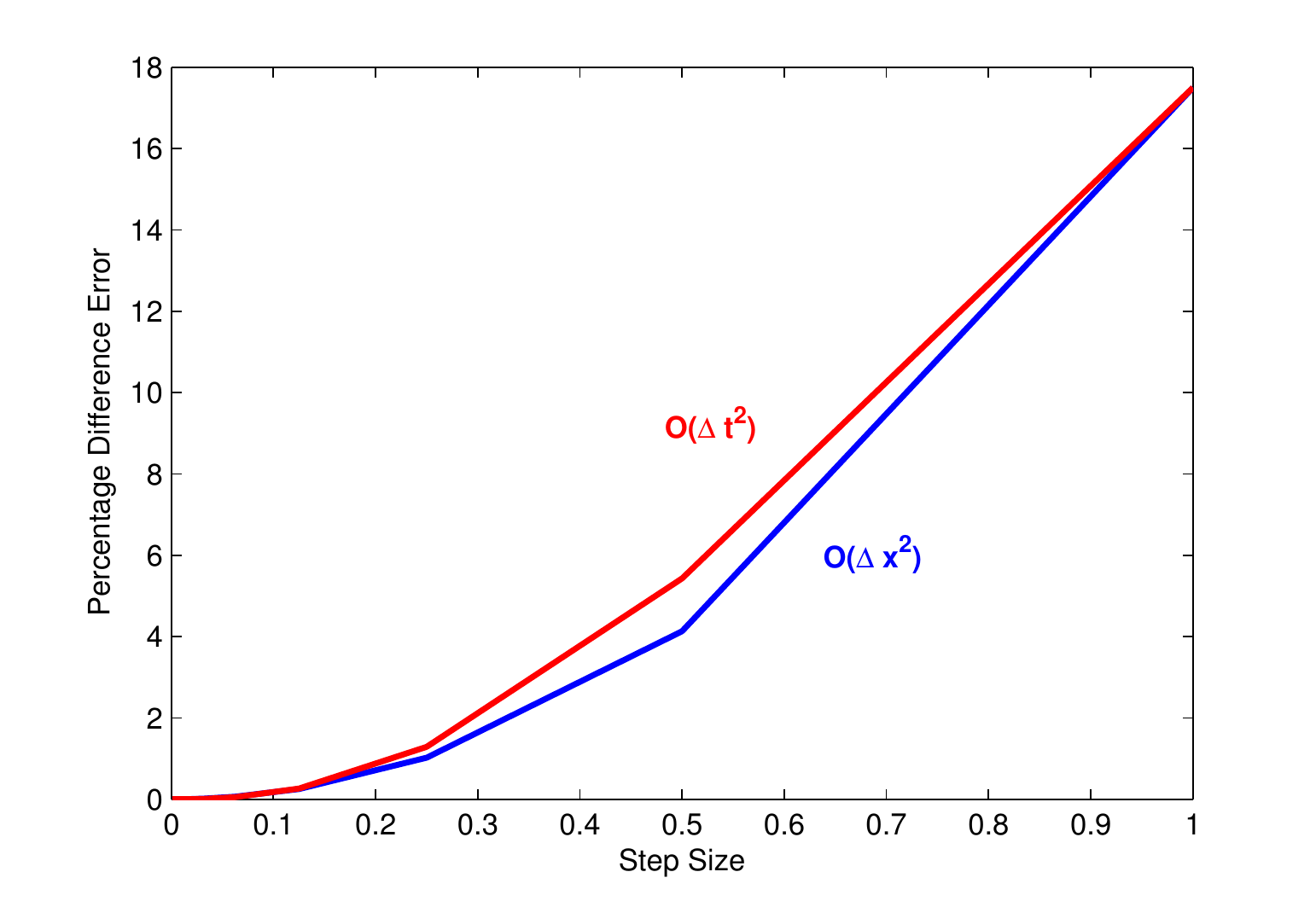}}
\caption{\label{fig:3}(Color online).  Graphical representation of quadratic convergence
in space and time.}
\end{figure}

\subsection{Random noise}
To further examine the split recombinations of stable and oscillation-free stable methods, we revisit the example equation, Eq.~(\ref{eqn:fisher-eq}) and Eq.~(\ref{eqn:fisher-bc}), with an alternative initial condition,
\begin{eqnarray}\label{eqn:fisher-ic-noise}
u(x,0) = \sin \left(\frac{\pi x}{10}\right)+\mathrm{noise}, 
\end{eqnarray}
which adds random noise to the initial sine wave to create spatial oscillations and an inconsistency between initial and boundary conditions. 
The random noise, which is created to represent errors introduced in practical measurements, is randomly distributed with mean of 0 and standard deviation of 1/3 so that 99.7\% of measurement errors are less than 1 unit by the normal rule. For solutions behaving similar to the steady-state analyzed in section \ref{sub:verification} this ensures max error (using $l_\infty$ norm) to be less than 100\% of the function value with 99.7\% certainty. Though controlled, these extreme amounts of error are chosen to demonstrate the resilience of the oscillation-free stable weighted backward Euler scheme. Numerical solutions with the Crank-Nicolson and weighted backward Euler schemes are shown in Figure~\ref{fig:4} for specific times and in Figure~\ref{fig:5} for all times in the interval [0, 50], for direct comparison with Figure~\ref{fig:1} and Figure~\ref{fig:2}. The results robustly confirm the oscillation-free capability of the weighted backward Euler scheme with operator splitting even for noisy initial conditions, over the stable Crank-Nicolson scheme.

\begin{figure}[t]
\includegraphics[width= 7. cm]{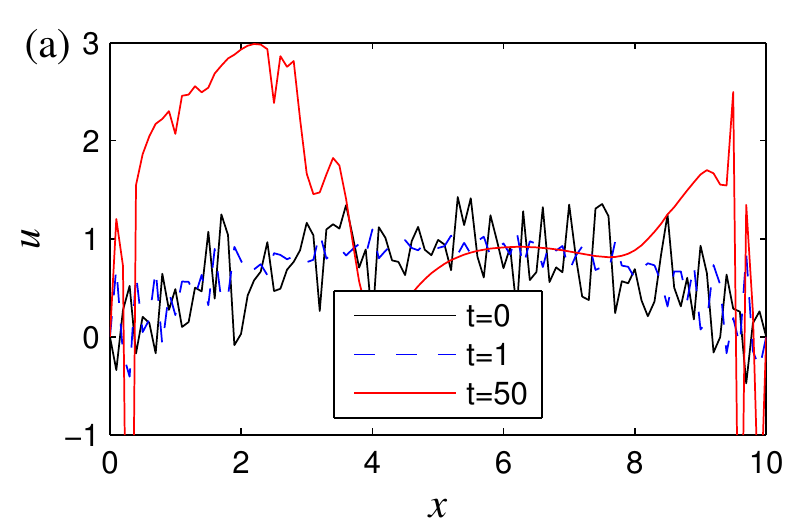}
\includegraphics[width= 7. cm]{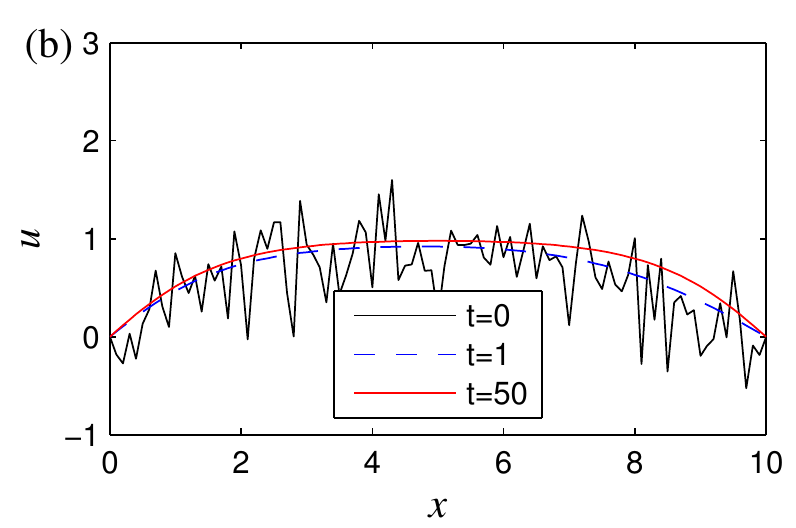}
\caption{(Color online). Comparing solution behavior between (a) Crank-Nicolson and (b) Weighted Backward Euler methods to Eq.~(\ref{eqn:fisher}) where the initial condition is modified by random noise in Eq.~(\ref{eqn:fisher-ic-noise}).\label{fig:4}}
\end{figure}

\begin{figure}[t]
\includegraphics[width= 7. cm]{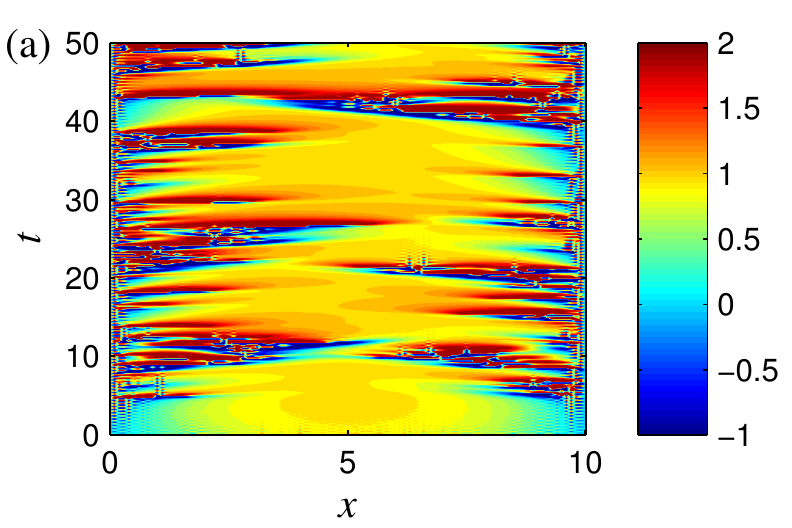}
\includegraphics[width= 7. cm]{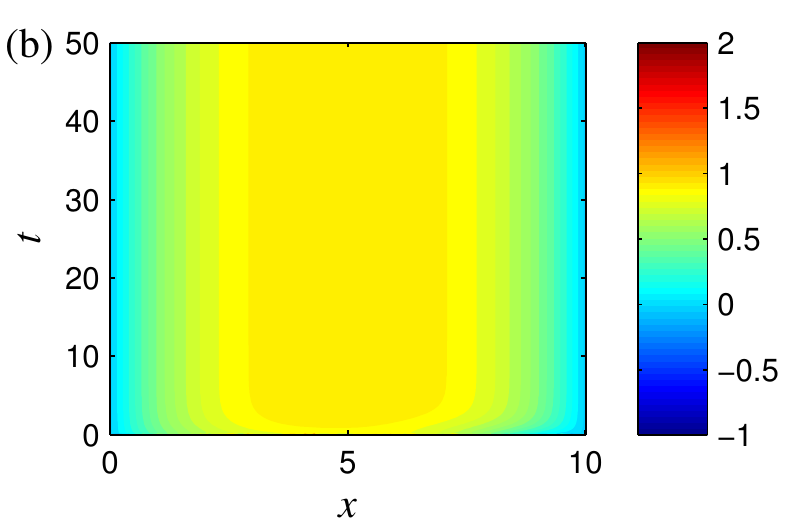}
\caption{(Color online). Comparing solution behavior between (a) Crank-Nicolson and (b) Weighted Backward Euler methods to Eq.~(\ref{eqn:fisher}) where the initial condition is modified by random noise in Eq.~(\ref{eqn:fisher-ic-noise})  over the whole parameter space.\label{fig:5}}
\end{figure}


\section{Summary and Discussions}\label{sec:conclusions}

We have developed a technique for ensuring a stable and oscillation-free numerical solution to a semilinear diffusion equation. The oscillation-free feature is achieved by using operator splitting to simplify the geometry of the problem and access the linear diffusion portion which can be solved with an oscillation-free stable method. We developed a second-order accurate weighted backwards Euler scheme to ensure oscillation-free stability of our diffusion split solution and solve the reaction split equation exactly. The solution of the two partitions are symmetrically recombined to maintain second order accuracy in space and time.

As data measurement is a major portion of modeling real-life phenomena and measurement error is inevitable in practical problems, we made a point to make our problem ill-posed by forcing initial discrepancies at the boundaries. Stability condition ensues not the only condition that damps out any propagation of this initial error, though traditionally it is often the only one considered. Through an assumed linearized operator, we showed that our method is unconditionally stable and oscillation-free. The stability and oscillation-free feature were further verified for a nonlinear equation with the Fisher-type equation as an example where the second-order accuracy in space and time is illustrated in Table \ref{tb:accuracy}. Where applicable, oscillation-free stability for a local linearization can be sufficient for global numerical stability of such a semilinear equation. Recombination of split solutions to linear problems is stable if each problem is solved with a stable method. This is not necessarily true for nonlinear problems. Our results support the hypothesis that a solution to a semilinear problem could be oscillation-free stable if each split solution were oscillation-free stable.

We contrasted our weighted backward Euler method with the well-known Crank-Nicolson method to demonstrate how the split recombination need not maintain stability if the methods are not oscillation-free, which fails in the Crank-Nicolson method for large enough ratios of $\dt/\dx^2$. Oscillations inherent in a numerical method are quickly exposed by even the slightest error, as showed in Figure \ref{fig:2}(a) and Figure \ref{fig:5}(a) for the Crank-Nicolson method, whereas oscillation-free methods remain unaffected by slight discrepancies and eventually damp out even a large error, as demonstrated by the results for our method in Figure \ref{fig:2}(b) and Figure \ref{fig:5}(b). In other words, both split-step solutions recombine stable methods, but the one with weighted backward Euler scheme recombines oscillation-free stable methods and the solution stays oscillation-free stable and even dampens our spatial and temporal oscillations created by measurement errors. However, the split-step solution that recombines stable oscillatory method with the exact split solution propagates initial errors and even becomes unstable for the well-posed problem.

Our technique for ensuring oscillation-free stable solutions can be extended beyond the particular schemes used in this paper. In our example problem (\ref{eqn:fisher-eq}), we applied a symmetric split-step method that allows us to solve the nonlinear split problem exactly and numerically invert the diffusion split problem only once per whole time step while maintaining second order accuracy in time. Not every semilinear equation can be split to allow access to an exact solution of the nonlinear portion, however, the numerical scheme chosen must be itself oscillation-free stable and maintain the accuracy of the other numerical components. A linear portion can always be solved using the weighted Euler or similar oscillation-free stable scheme and the solutions to the split problems can still be combined alternatingly to guarantee second order accuracy. Thus, our method can be applied to a wide range of semilinear problems, whether they be initial boundary value problems or simply initial value problems. Mixed boundary conditions can also be accommodated with the method.

\section*{Acknowledgments}
For their fruitful discussions and assistance, the authors would like to thank  G. M. Vogel, Zaki Jubery, Gitau W. Munge II, and Joseph Theisen, Washington State University. The first author (R.C. Harwood) acknowledges the support of George Fox University Grant GFU2013G08 in partially funding this research. 




\bibliographystyle{elsarticle-num}





\end{document}